\newtheorem{theorem}{Theorem}
\theoremstyle{definition}
\newtheorem{definition}[theorem]{Definition}
\newtheorem{example}[theorem]{Example}
\theoremstyle{remark}
\begin{document}

\title[A rigid origami elliptic-hyperbolic vertex duality]{A rigid origami elliptic-hyperbolic vertex duality}

\author{Thomas C. Hull}

\address{Mathematics Department, Franklin \& Marshall College, Lancaster, PA, 17604-3003  USA}

\email{thomas.hull@fandm.edu}

%\keywords{origami, kinematics, metamaterials}

%\pacs[MSC Classification]{52C25, 70B15, 52B70}

\maketitle

\begin{abstract}
The field of rigid origami concerns the folding of stiff, inelastic plates of material along crease lines that act like hinges and form a straight-line planar graph, called the \textit{crease pattern} of the origami. Crease pattern vertices in the interior of the folded material and that are adjacent to four crease lines, i.e. \textit{degree-4} vertices, have a single degree of freedom and can be chained together to make flexible polyhedral surfaces. Degree-4 vertices that can fold to a completely flat state have folding kinematics that are very well-understood, and thus they have been used in many engineering and physics applications. However, degree-4 vertices that are not flat-foldable or not folded from flat paper so that the vertex forms either an \textit{elliptic} or \textit{hyperbolic} cone, have folding angles at the creases that follow more complicated kinematic equations. In this work we present a new duality between general degree-4 rigid origami vertices, where dual vertices come in elliptic-hyperbolic pairs that have essentially equivalent kinematics. This reveals a mathematical structure in the space of degree-4 rigid origami vertices that can be leveraged in applications, for example in the construction of flexible 3D structures that possess metamaterial properties.
\end{abstract}

\section{Introduction}\label{sec1}

The folding of stiff, flat sheets along creases that act as hinges, commonly known as \textit{rigid origami}, has had a tremendous influence on mechanical, robotic, and metamaterial design for over a decade \cite{YouSci14,Silverberg1,Tachi15} and shows no sign of slowing down \cite{Paulino2022,Wang2023,XingYou,Song24,Zhu24}. Rigid origami makes for an attractive medium in such applications for several reasons.  First, when the unfolded material forms a flat, developable sheet, also known as \textit{Euclidean} material, then fabrication can be easier. Second, if the material can be rigidly folded continuously to a state where all the creases have zero dihedral angles, also known as the \textit{flat-foldable} case, the folded form can be stored compactly. Third, the kinematics of rigid origami can lead to novel mechanics.  See \cite{YOrigami} for examples of all three.

Mathematical advances in analytical models of rigid origami \cite{belcastro} and flexible polyhedral surfaces \cite{Bobenko} helped facilitate this surge in rigid origami applications. This in turn led to a complete understanding of the kinematics of the most simple non-trivial case: developable, flat-foldable rigid origami crease patterns that consist of a single degree-4 vertex which are then easily applied to larger crease patterns made of multiple such vertices \cite{Tachi3,Lang1}. In contrast, the algebraic development of the kinematics of general, not necessarily developable nor flat-foldable degree-4 vertices has been slower, with full equations appearing recently \cite{FHK}.

In this report we prove a new structural relationship between degree-4 rigid origami vertex foldings. Specifically, we describe a natural \textit{duality} between non-Euclidean \textit{elliptic} and \textit{hyperbolic} degree-4 vertices, which are defined as having the the sum of their sector angles between creases be $<2\pi$ or $>2\pi$, respectively.  After setting up notation and describing background results in Section~\ref{sec1.5}, we present our main result in Section~\ref{sec2}, that dual vertices have \textit{essentially equivalent kinematics}, meaning that their configuration spaces are identical up to a change in sign of the folding angles at the creases. This duality theory shows that Euclidean, flat-foldable degree-4 vertices, which are \textit{self-dual} under this scheme, play a central role in the kinematics of general degree-4 vertices, including the non-Euclidean cases. In Section~\ref{sec3} we describe some geometric interpretations of this duality concept. In Section~\ref{sec4} we show how combined dual elliptic-hyperbolic degree-4 vertices can tessellated in three-dimensional space to form flexible CW complexes, providing examples that have properties that make them good candidates for metamaterial applications. We conclude with questions for further study.

\section{Background}\label{sec1.5}

We first describe our notation. A \textit{generalized rigid origami} is a  polyhedral surface $P$ with boundary that can flex along its edges; we call the edge and vertex  set $C$ of $P$ the \textit{crease pattern} of $P$. Each edge (i.e., crease) $c_i$ in $C$ will be folded by some \textit{folding angle} $\rho_i\in [-\pi,\pi]$, where $\rho_i=0$ means the crease $c_i$ is unfolded and $\rho_i>0$ (resp. $\rho_i<0$) means $c_i$ is a \textit{valley} crease (resp. \textit{mountain} crease). If $C$ has $n$ creases then their set of viable folding angles give us points $\vec{\rho}=(\rho_1,\ldots, \rho_n)$ in $\mathbb{R}^n$ that form the \textit{configuration space} $S(C)$ of $C$. 

A crease pattern $C$ that consists of a single vertex in the interior of $P$ is completely determined by the sector angles $\alpha_i$ in order around the vertex.  Thus we may specify a degree-4 rigid origami vertex by $C=(\alpha_1,\alpha_2,\alpha_3,\alpha_4)$.  If $\sum\alpha_i=2\pi$ then we say $C$ is \textit{Euclidean}. The \textit{non-Euclidean} cases are $\sum\alpha_i<2\pi$, an \textit{elliptic} vertex, and $\sum\alpha_i>2\pi$, a \textit{hyperbolic} vertex.

We say that a single-vertex crease pattern is \textit{flat-foldable} if at least one point of the form $(\pm \pi, \pm \pi, \ldots, \pm\pi)$ is in its configuration space (i.e., the rigid origami vertex can be flexed so that all of the folding angles are $\pm\pi$, which means the folded form lies flat in a plane). A basic result of flat-foldable origami is \textit{Kawasaki's Theorem}, which states that a single-vertex crease pattern $C$ on $P$ is flat-foldable if and only if the alternating sum of the sector angles between the creases around the vertex on $P$, in order, equals zero. (See \cite{Origametry} for a proof.) We will denote these sector angles by $\alpha_i$.  

In the degree-4 case, Kawasaki's Theorem implies that $\alpha_1-\alpha_2+\alpha_3-\alpha_4=0$. If the vertex is also Euclidean, then we have $\alpha_1=\pi-\alpha_3$ and $\alpha_2=\pi-\alpha_4$.

Flat-foldable origami is useful in engineering and architecture applications because it leads to compact storage of materials. The mechanics of origami crease patterns that can flex along their creases to smoothly fold and unfold have also attracted attention in applications, especially origami made of degree-4 vertices since as a movable mechanism they possess only one degree of freedom. Controlling such rigid origami mechanisms requires knowing how all the folding angles of the creases relate to each other. The equations relating such parameters for rigid body motions are referred to as \textit{kinematic equations} in mechanical engineering, and they can be notoriously complicated and difficult capture efficiently; see \cite{Uchida} for example. It is surprising, then, that the equations governing the folding angles of Euclidean, flat-foldable degree-4 vertices are so elegant:

\begin{figure}
\centerline{
\includegraphics[width=\linewidth]{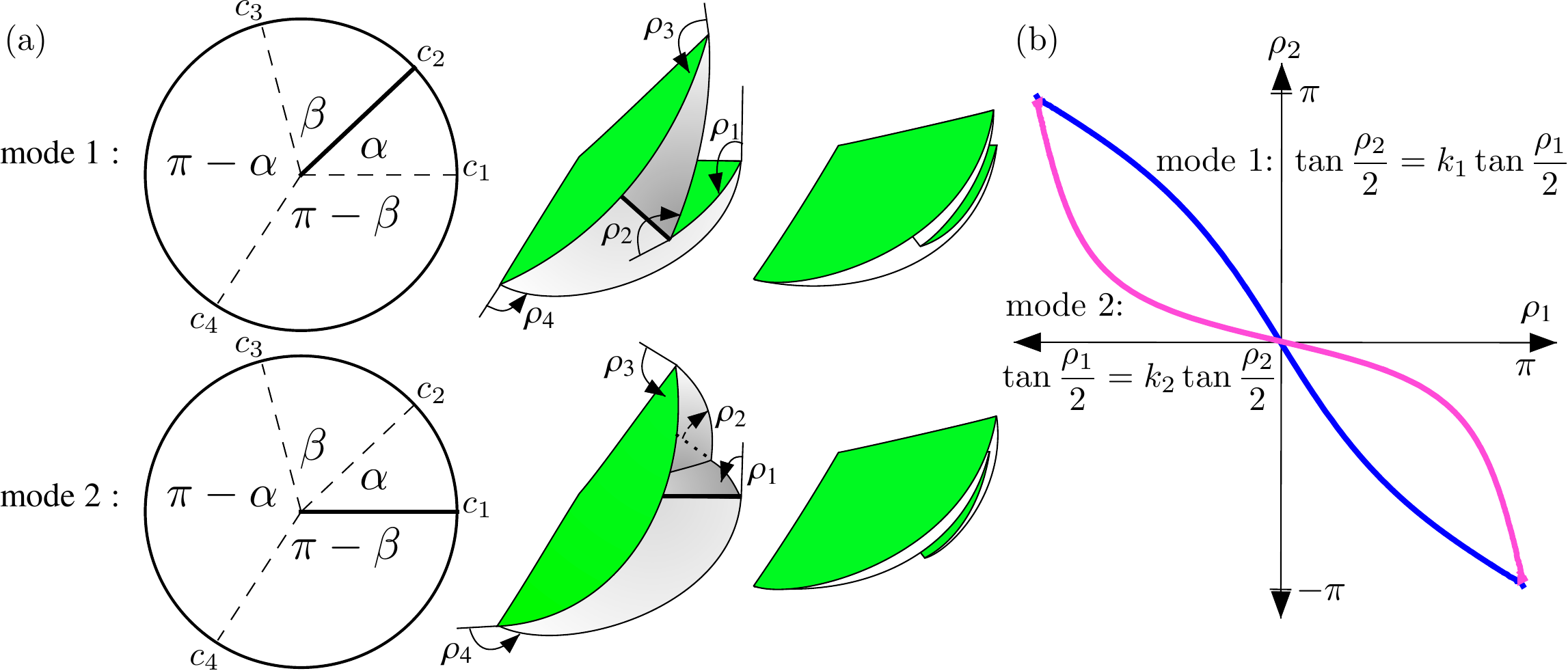}}
\caption{(a) The two folding modes of flat-foldable, Euclidean degree-4 vertices, crease patterns and folding images. (b) Folding angle $(\rho_1,\rho_2)$ plots of their folding angle equations from Theorem~\ref{thm:FF}.}\label{fig2}
\end{figure}

\begin{theorem}\label{thm:FF}
Let $(\rho_1,\ldots,\rho_4)$  be the folding angles of a Euclidean, flat-foldable degree-4 rigid origami vertex $C=(\alpha,\beta,\pi-\alpha,\pi-\beta)$, where we assume that $\alpha<\pi/2$ (see Figure~\ref{fig2}). 
Then when rigidly folding $C$ the folding angles must follow one of two algebraic relationships, called the {\em folding modes} of $C$:
\begin{eqnarray}
\mbox{mode 1:} & \rho_1=\rho_3, \rho_2=-\rho_4, \mbox{ and }\tan(\rho_2/2)=-k_1 \tan(\rho_1/2)\\
\mbox{mode 2:} & \rho_2=\rho_4, \rho_1=-\rho_3, \mbox{ and }\tan(\rho_1/2)=k_2 \tan(\rho_2/2)
\end{eqnarray}
where $k_1, k_2$ are constants determined by $\alpha$ and $\beta$. Specifically, 
$$k_1(\alpha,\beta)=\frac{\cos((\alpha+\beta)/2)}{\cos((\alpha-\beta)/2)}=  \frac{1-\tan(\alpha/2)\tan(\beta/2)}{1+\tan(\alpha/2)\tan(\beta/2)} \mbox{ and }$$
$$k_2(\alpha,\beta)=\frac{\sin((\alpha-\beta)/2)}{\sin((\alpha+\beta)/2)}=\frac{\tan(\alpha/2)-\tan(\beta/2)}{\tan(\alpha/2)\tan(\beta/2)}.$$
\end{theorem}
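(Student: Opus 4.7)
The plan is to model the rigid folding as a spherical 4-bar linkage: intersecting the folded paper with a small sphere centered at the vertex produces a closed spherical quadrilateral whose four side arc-lengths are the sector angles $\alpha_1,\ldots,\alpha_4$, and whose four interior angles are $\pi-\rho_i$. Consistency of a folding is exactly the realizability of this quadrilateral, so the task reduces to extracting algebraic relations among the $\rho_i$ using spherical trigonometry.

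First I would split the spherical quadrilateral along a diagonal $d$ into two spherical triangles: one with sides $\alpha_1,\alpha_2$ meeting at interior angle $\pi-\rho_2$, and the other with sides $\alpha_3,\alpha_4$ meeting at interior angle $\pi-\rho_4$. Applying the spherical law of cosines to each triangle expresses $\cos d$ in two different ways, and matching them (together with matching the decompositions of the angles at the diagonal endpoints, which must sum to $\pi-\rho_1$ and $\pi-\rho_3$) yields a single implicit polynomial relation among $\rho_1,\rho_2,\rho_3,\rho_4$ and the four sector angles.

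Next, I would specialize to the Euclidean flat-foldable case by substituting $\alpha_3=\pi-\alpha_1$ and $\alpha_4=\pi-\alpha_2$. Using $\cos(\pi-x)=-\cos x$ and $\sin(\pi-x)=\sin x$, the implicit relation collapses substantially, and after the half-angle tangent substitution $t_i=\tan(\rho_i/2)$ the resulting polynomial in the $t_i$ factors into exactly two pieces, one per folding mode. Each factor forces the appropriate identification of opposite folding angles ($\rho_1=\rho_3,\rho_2=-\rho_4$ in mode 1 and the swap in mode 2), leaving the clean tan-tan relation. The constants $k_1,k_2$ drop out directly, and their equivalent forms in terms of $\tan(\alpha/2),\tan(\beta/2)$ follow from the sum-to-product identities $\cos A\pm\cos B$ and $\sin A\pm\sin B$ combined with the half-angle substitution.

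The main obstacle is the sign bookkeeping and branch identification: determining which factor corresponds to which mode depends on the choice of diagonal and on the orientation conventions for interior versus exterior angles, so I would carefully track signs by checking a sample fold (e.g. a symmetric ``bird base''-type configuration) against the formulas. I would also need to verify that each algebraic branch actually supports a continuous family of foldings rather than only isolated solutions; this I would handle by linearizing the closure equation at $\vec{\rho}=\vec{0}$ and checking that the tangent cone to $S(C)$ at the unfolded state consists of exactly two distinct one-dimensional directions, which then extend globally by the factorization above.
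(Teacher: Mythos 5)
The paper does not actually prove Theorem~\ref{thm:FF}; it states the result and defers to \cite{Origametry,TachiHull}. Your spherical 4-bar linkage derivation --- intersecting the folded cone with a small sphere so that the sector angles become arc-lengths and the interior angles become $\pi-\rho_i$, splitting the spherical quadrilateral along a diagonal, equating the two law-of-cosines expressions for the diagonal, and then specializing to $\alpha_3=\pi-\alpha_1$, $\alpha_4=\pi-\alpha_2$ --- is precisely the standard route taken in those references, and the plan is sound: in the Euclidean flat-foldable case the two expressions for $\cos d$ collapse to $\cos\rho_2=\cos\rho_4$ (hence $\rho_2=\pm\rho_4$, and symmetrically $\rho_1=\pm\rho_3$ using the other diagonal), and the half-angle substitution does yield the two linear factors with the stated constants.

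One technical caveat: your final verification step, ``linearizing the closure equation at $\vec{\rho}=\vec{0}$ and checking that the tangent cone to $S(C)$ consists of exactly two distinct one-dimensional directions,'' will not work as described. At the flat state the first-order flex condition is $\sum_i \omega_i \mathbf{u}_i=\mathbf{0}$ with the unit crease vectors $\mathbf{u}_i$ lying in the plane, i.e.\ two scalar equations in four unknowns; the linearization therefore produces the full two-dimensional plane spanned by the two mode directions, not the two lines themselves. Isolating the two tangent directions requires the second-order terms of the closure condition. This is easily repaired and does not threaten the proof: since you already have the explicit factorization, each mode gives a global one-parameter curve in $S(C)$ (parametrized by $\rho_1$ or $\rho_2$) whose membership in the configuration space can be verified directly by constructing the folded configuration, so no separate tangent-cone argument is needed.
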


Here we express the constants $k_1$ and $k_2$ in ways commonly found in the literature.
See Figure~\ref{fig2} for a visualization of Theroem~\ref{thm:FF}, and see \cite{Origametry,TachiHull} for proofs. The main take-away is that in the Euclidean, flat-foldable case of degree-4 vertices, the folding angle relationships are linear under the half-angle substitution $t_i=\tan(\rho_i/2)$. However, also notice that constants $k_1$ and $k_2$ possess a certain amount of symmetry, namely 
$k_1(\alpha,\beta) = k_2(\pi-\alpha,\beta)=-k_2(\alpha,\pi-\beta)$ and
$k_2(\alpha,\beta)=k_1(\pi-\alpha,\beta)=-k_1(\alpha,\pi-\beta)$.  This symmetry reflects how determining whether we are folding in mode 1 or 2 depends on our choice for $\alpha$.

In contrast, non-flat-foldable degree-4 rigid origami vertex crease patterns have more complicated kinematic equations, such as those seen in \cite{Ivan}. Because they will be used in this paper, we report the more recent equations found in \cite{FHK}:

\begin{theorem}\label{thm:FHK}
Let $(\rho_1,\ldots,\rho_4)$  be the folding angles of a general degree-4 rigid origami vertex $C=(\alpha_1,\ldots, \alpha_4)$. 
Then the relationships between the folding angles $\rho_i$ are described as follows: for $i=1,\ldots, 4$, with indices taken cyclicly under addition, with $\rho_i$ at the crease between $\alpha_i$ and $\alpha_{i+1}$ and setting  $t_i = \tan(\rho_i/2)$, we have

\begin{equation}\label{THARPeq1}
%\begin{multlined}
t_i^2 = 
\frac{-(1+ t_{i+2}^2 )\cos(\alpha_{i-1}+\alpha_i)+ t_{i+2}^2 \cos(\alpha_{i+1}-\alpha_{i+2})+\cos(\alpha_{i+1}+\alpha_{i+2})}
{(1+t_{i+2}^2 )\cos(\alpha_{i-1}-\alpha_i)-t_{i+2}^2 \cos(\alpha_{i+1}-\alpha_{i+2})-\cos(\alpha_{i+1}+\alpha_{i+2})}
%\end{multlined}
\end{equation}
and
\begin{equation}\label{THARPeq2}
\begin{split}
%\cos\alpha_{i+2} (1+\tan^2\frac{\rho_i}{2})(1+\tan^2\frac{\rho_{i+1}}{2}) -  \cos(\alpha_{i+1}+\alpha_{i}+\alpha_{i-1}) = 
%\cos(\alpha_{i+1}-\alpha_i-\alpha_{i-1}) \tan^2\frac{\rho_{i+1}}{2} \\ + \cos(\alpha_{i+1}+\alpha_i-\alpha_{i-1}) \tan^2\frac{\rho_i}{2} 
% + 4\sin\alpha_{i+1} \sin\alpha_{i-1} \tan\frac{\rho_i}{2}\tan\frac{\rho_{i+1}}{2} 
%+ \cos(\alpha_{i+1}-\alpha_i+\alpha_{i-1}) \tan^2\frac{\rho_i}{2}\tan^2\frac{\rho_{i+1}}{2},
\cos\alpha_{i+2} \left(1+t_{i}^2\right)\left(1+t_{i+1}^2\right) =\quad 
&\cos(\alpha_{i+1}-\alpha_i-\alpha_{i-1}) t_{i+1}^2 
+ \cos(\alpha_{i+1}+\alpha_i-\alpha_{i-1})t_{i+1}^2 \\
+ &\cos(\alpha_{i+1}-\alpha_i+\alpha_{i-1}) t_{i}^2 t_{i+1}^2
+ \cos(\alpha_{i+1}+\alpha_{i}+\alpha_{i-1}) \\
+ & 4\sin\alpha_{i+1} \sin\alpha_{i-1} t_{i} t_{i+1}.
\end{split}
\end{equation}
\end{theorem}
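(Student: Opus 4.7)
The plan is to work with the spherical picture: intersecting the folded vertex with a small sphere centered at it produces a spherical quadrilateral whose side lengths are the sector angles $\alpha_1,\ldots,\alpha_4$ and whose interior angle at the corner where crease $c_i$ meets the sphere equals $\pi-\rho_i$. Both equations will follow from combining spherical-law-of-cosines applications with the Weierstrass (half-angle) substitutions $\cos\rho_j=(1-t_j^2)/(1+t_j^2)$ and $\sin\rho_j=2t_j/(1+t_j^2)$.

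For equation~\eqref{THARPeq1}, relating the opposite folding angles $\rho_i$ and $\rho_{i+2}$, I would draw the diagonal $d$ of the spherical quadrilateral joining the two corners adjacent to creases $c_{i-1}$ and $c_{i+1}$, splitting the quadrilateral into two spherical triangles: one containing the corner at $c_i$ and one containing the corner at $c_{i+2}$. The spherical law of cosines in each triangle gives
\[
\cos d=\cos\alpha_{i-1}\cos\alpha_i-\sin\alpha_{i-1}\sin\alpha_i\cos\rho_i=\cos\alpha_{i+1}\cos\alpha_{i+2}-\sin\alpha_{i+1}\sin\alpha_{i+2}\cos\rho_{i+2}.
\]
Substituting for $\cos\rho_i$ and $\cos\rho_{i+2}$, clearing $(1+t_i^2)(1+t_{i+2}^2)$, and applying $\cos A\cos B\pm\sin A\sin B=\cos(A\mp B)$ to fold products into cosines of $\alpha_{i-1}\pm\alpha_i$ and $\alpha_{i+1}\pm\alpha_{i+2}$, then solving for $t_i^2$, produces exactly the quotient in~\eqref{THARPeq1}.

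For equation~\eqref{THARPeq2}, relating adjacent folding angles $\rho_i$ and $\rho_{i+1}$, no single spherical triangle of the quadrilateral contains both of these as interior angles, so a single-diagonal argument is insufficient. Two routes suggest themselves: (i) encode the closing constraint as a product of four alternating fold-and-sector rotations in $\mathrm{SO}(3)$ equal to the identity, and read off one convenient entry of the resulting matrix equation; or (ii) start from the spherical law of cosines on the diagonal from $c_{i-1}$ to $c_{i+2}$ (whose adjoining triangle has $\rho_i$ and $\rho_{i+1}$ as two of its angles) and augment it with a second identity such as the dual (polar) spherical law of cosines or the spherical five-parts formula, in order to eliminate the diagonal. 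Either route yields a polynomial in $\cos\rho_i,\sin\rho_i,\cos\rho_{i+1},\sin\rho_{i+1}$ and the $\alpha_j$; substituting the half-angle expressions, clearing the denominator $(1+t_i^2)(1+t_{i+1}^2)$, and regrouping via the same sum-and-difference identities should produce the five $\cos(\alpha_{i+1}\pm\alpha_i\pm\alpha_{i-1})$ terms together with the cross term $4\sin\alpha_{i-1}\sin\alpha_{i+1}t_it_{i+1}$ on the right side of~\eqref{THARPeq2}.

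The hard part will be equation~\eqref{THARPeq2}. The underlying trigonometric identities are elementary, but the combinatorial bookkeeping required to regroup the expanded trigonometric products of the $\alpha_j$ into exactly the five labeled cosine-of-signed-sum terms, with the correct coefficients in $t_i$ and $t_{i+1}$ and the correct sign on the $\sin\alpha_{i-1}\sin\alpha_{i+1}$ cross term, is tedious. One must also verify that the branch choices implicit in taking square roots of~\eqref{THARPeq1} are compatible with the sign of the cross term in~\eqref{THARPeq2}, so that the two equations jointly cut out the true one-dimensional configuration space rather than a strictly larger algebraic variety.
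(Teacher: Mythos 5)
The paper does not actually prove this theorem: Equations~\eqref{THARPeq1} and \eqref{THARPeq2} are quoted from \cite{FHK}, so there is no internal proof to compare against. Your spherical-linkage setup is the standard route, and your derivation of Equation~\eqref{THARPeq1} is correct and essentially complete: the two laws of cosines across the diagonal, the Weierstrass substitution, and clearing denominators reproduce the printed quotient exactly. One caution on indexing: your law-of-cosines display puts the angle $\pi-\rho_i$ at the corner where sides $\alpha_{i-1}$ and $\alpha_i$ meet, which is what the printed formula requires, but it contradicts the theorem's stated convention that $\rho_i$ lies between $\alpha_i$ and $\alpha_{i+1}$ (the paper itself is inconsistent here; compare the convention used in the proof of Theorem~\ref{thmduality}). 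You should state explicitly which convention you are adopting.

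For Equation~\eqref{THARPeq2} your proposal only names two candidate routes and defers the computation, so as written it is a plan rather than a proof; moreover the arc joining the corners at $c_{i-1}$ and $c_{i+2}$ is a \emph{side} of the quadrilateral (namely $\alpha_{i+2}$), not a diagonal, and the region it bounds is the whole quadrilateral, not a triangle. The correct version of your route (ii) is: let $e$ be the genuine diagonal joining the corners at $c_{i-1}$ and $c_{i+1}$; the law of cosines gives $\cos e$, the five-parts and sine rules give $\sin e\cos\gamma$ and $\sin e\sin\gamma$ for the angle $\gamma$ that $e$ makes at the corner $c_{i+1}$, and expanding $\cos\alpha_{i+2}=\cos e\cos\alpha_{i+1}+\sin e\sin\alpha_{i+1}\cos(\pi-\rho_{i+1}-\gamma)$ eliminates $e$ and $\gamma$, leaving an expression affine in $\cos\rho_i$, $\cos\rho_{i+1}$, $\cos\rho_i\cos\rho_{i+1}$ and $\sin\rho_i\sin\rho_{i+1}$. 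The half-angle substitution then produces the corner coefficients $\cos(\alpha_{i+1}+\alpha_i+\alpha_{i-1})$, $\cos(\alpha_{i+1}-\alpha_i-\alpha_{i-1})t_{i+1}^2$, $\cos(\alpha_{i+1}+\alpha_i-\alpha_{i-1})t_i^2$, $\cos(\alpha_{i+1}-\alpha_i+\alpha_{i-1})t_i^2t_{i+1}^2$ and the cross term $4\sin\alpha_{i+1}\sin\alpha_{i-1}t_it_{i+1}$ --- note $t_i^2$ on the third coefficient, whereas the statement prints $t_{i+1}^2$ there. The printed version fails a consistency check against Theorem~\ref{thm:FF} for $C=(\alpha,\beta,\pi-\alpha,\pi-\beta)$, while the $t_i^2$ version factors as $\sin(\alpha+\beta)t_1^2-\sin(\alpha-\beta)t_2^2+2\sin\beta\,t_1t_2=0$ and recovers exactly the two modes $t_1=k_2t_2$ and $t_2=-k_1t_1$; so the statement as transcribed appears to contain a typo that completing your computation would have exposed. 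Your closing remark about branch choices is apt: Equation~\eqref{THARPeq1} determines only $t_i^2$, and the sign information resides in the cross term of Equation~\eqref{THARPeq2}, so both are needed to cut out the true configuration space.
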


Equation~\eqref{THARPeq1} describes the relationship between opposite folding angles, while Equation~\eqref{THARPeq2} captures the relationship between adjacent folding angles.

\section{Results}\label{sec2}

We are now ready to state the main definition of this paper.

\begin{definition}\label{def1}
If $C=(\alpha_1,\alpha_2,\alpha_3,\alpha_4)$ is a degree-4 rigid origami vertex, then we call the vertex $C^*=(\pi-\alpha_1,\pi-\alpha_2,\pi-\alpha_3,\pi-\alpha_4)$ the \textit{dual vertex} of $C$.
\end{definition}

We make a few elementary observations:

First, if $C$ is an elliptic vertex, then $C^*$ is hyperbolic, and vice versa.

Second, if $C$ is a flat-foldable vertex, then so is $C^*$ (by Kawasaki's Theorem).
  If $C$ is also Euclidean, then $C=C^*$ (again by Kawasaki's Theorem). That is, flat-foldable Euclidean degree-4 vertices are \textit{self-dual}.
  
\begin{example}\label{ex1}

\begin{figure}
\centerline{\includegraphics[width=\linewidth]{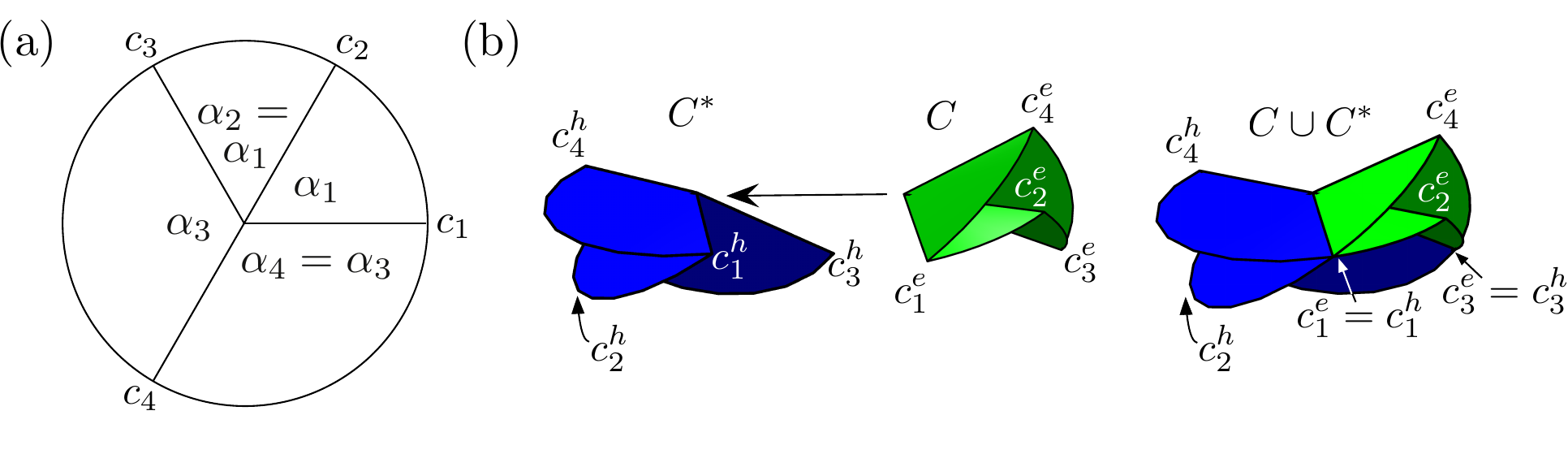}}
\caption{\em (a) A bird's foot vertex, where $\sum\alpha_i$ need not equal $2\pi$. (b) An elliptic $C$ being joined to its dual $C^*$ to show that $C\cup C^*$ is the intersection of two mountain-folded discs.}\label{fig3}
\end{figure}

We motivate Definition \ref{def1} and prepare for what follows by way of an example. Suppose that $C$ is a \textit{bird's foot} vertex, meaning that $\alpha_1=\alpha_2$ and $\alpha_3=\alpha_4$ (and thus the crease pattern looks somewhat like a bird's foot). An example is shown in Figure~\ref{fig3}(a).  Let $C$ be an elliptic bird's foot with creases $c_i^e$, and so $C^*$ will be a hyperbolic bird's foot, and we denote its creases by $c_i^h$.  We flex $C$ and $C^*$ so that the angle between the creases $c_1^e$ and $c_3^e$ is the same as the angle between $c_1^h$ and $c_3^h$.  Then we merge $C$ and $C^*$, identifying $c_1^e=c_1^h$ and $c_3^e=c_3^h$. This is shown in Figure~\ref{fig3}(b), where the images were created in Mathematica using folding angles derived from Equations~\eqref{THARPeq1} and \eqref{THARPeq2}.  This combined vertex, which we denote $C\cup C^*$, looks like two mountain-folded discs of paper intersecting, i.e., the merged creases $c_1^e=c_1^h$ have the appearance of intersecting planes, and thus have equal (up to sign) folding angles $\rho_1^e$ and $\rho_1^h$. The same seems to hold for $c_3^e=c_3^h$. 

However, we could have performed the operation in Figure~\ref{fig3}(b) in reverse; if we take two discs of paper that are mountain-folded with equal folding angles and intersect them along their creases at some angle, then we may separate this along the lines of intersection to create an elliptic degree-4 vertex $C$ and a hyperbolic degree-4 vertex $C^*$ that are duals of each other. By construction, the corresponding folding angles of these two degree-4 vertices will be equal (up to sign).
    
\end{example}    

Example \ref{ex1} generalizes to non-bird's foot vertices and can be proven independently of Mathematica pictures. We capture this in the following Theorem.
    
%The following Theorem motivates this definition of duality. 

\begin{theorem}
\label{thmduality}
The rigid folding kinematics of a degree-4 vertex $C$ and its dual $C^*$ are essentially equivalent.  By this we mean that the configuration spaces $S(C)$ and $S(C^*)$ are the same up to a change of sign in the coordinates.
\end{theorem}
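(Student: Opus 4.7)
The plan is to use Theorem~\ref{thm:FHK} as the algebraic characterization of $S(C)$ and $S(C^*)$, then exhibit an explicit involutive bijection between solutions of the two systems. Specifically, I would propose the map
$$\phi: S(C) \to S(C^*), \qquad \phi(\rho_1,\rho_2,\rho_3,\rho_4) = (-\rho_1,\rho_2,-\rho_3,\rho_4),$$
which corresponds on the tangent-half-angle side to the parity substitution $t_i^* = (-1)^i t_i$. The task then reduces to verifying that if $(t_1,\ldots,t_4)$ satisfies Equations~\eqref{THARPeq1} and \eqref{THARPeq2} for the angles $\alpha_i$, then $((-1)^i t_i)$ satisfies the same equations with the $\alpha_i$ replaced by $\pi-\alpha_i$. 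Because duality is involutive (that is, $(C^*)^* = C$ and $\phi\circ\phi = \mathrm{id}$), checking one direction suffices for bijectivity.

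For the opposite-crease relation \eqref{THARPeq1}, every cosine appearing has the form $\cos(\alpha_j\pm\alpha_k)$ for a pair of sector angles. Under the substitution $\alpha_i\mapsto\pi-\alpha_i$, a sum becomes $2\pi-(\alpha_j+\alpha_k)$ and a difference becomes $-(\alpha_j-\alpha_k)$, so in either case the cosine is unchanged. Since Equation~\eqref{THARPeq1} only sees $t_i^2$, and $(t_i^*)^2 = t_i^2$, this equation transfers verbatim from $C$ to $C^*$ with no sign issues at all.

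For the adjacent-crease relation \eqref{THARPeq2} the bookkeeping is more delicate but still mechanical. Under $\alpha_i\mapsto\pi-\alpha_i$, I expect $\cos\alpha_{i+2}\mapsto -\cos\alpha_{i+2}$, every triple-sum term $\cos(\pm\alpha_{i+1}\pm\alpha_i\pm\alpha_{i-1})$ to pick up a factor of $-1$ (because $3\pi + x \equiv \pi + x\pmod{2\pi}$), and the product $\sin\alpha_{i+1}\sin\alpha_{i-1}$ to remain invariant. Thus Equation~\eqref{THARPeq2} written for $C^*$ in the variables $t_i^*$ equals the equation for $C$ in those same variables multiplied by $-1$, except for the cross-term $4\sin\alpha_{i+1}\sin\alpha_{i-1}\,t_i^* t_{i+1}^*$, which does \emph{not} acquire a sign. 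This is precisely where the parity choice $t_i^* = (-1)^i t_i$ earns its keep: it forces $t_i^* t_{i+1}^* = -t_i t_{i+1}$, so the cross-term also flips, and after dividing by $-1$ Equation~\eqref{THARPeq2} for $C^*$ collapses to Equation~\eqref{THARPeq2} for $C$.

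The main obstacle is the sign accounting in \eqref{THARPeq2} together with checking that a \emph{single} parity pattern on the $t_i$ makes all four cyclic instances ($i=1,\ldots,4$) of both \eqref{THARPeq1} and \eqref{THARPeq2} transform consistently. The alternating pattern $t_i^* = (-1)^i t_i$ is essentially forced by needing $t_i^* t_{i+1}^*=-t_i t_{i+1}$ for every $i$ (cyclically), and this is exactly the pattern that renders \eqref{THARPeq1} compatible (since only squares appear) while flipping the cross-term in \eqref{THARPeq2}. Once the calculation is carried out, $\phi$ is a well-defined involution between $S(C)$ and $S(C^*)$, which is precisely the asserted equivalence up to sign, and the bird's-foot picture of Example~\ref{ex1} becomes the special symmetric case.
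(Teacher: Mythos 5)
Your proposal is correct and follows essentially the same route as the paper: both arguments rest on the symmetry of Equations~\eqref{THARPeq1} and \eqref{THARPeq2} under $\alpha_i\mapsto\pi-\alpha_i$, with \eqref{THARPeq1} invariant because only $t_i^2$ and two-angle cosines appear, and \eqref{THARPeq2} forcing an alternating sign pattern on adjacent folding angles so that the cross-term $4\sin\alpha_{i+1}\sin\alpha_{i-1}t_it_{i+1}$ flips along with the other terms. Your explicit choice $t_i^*=(-1)^it_i$ is just a concrete representative of the sign change the paper describes, so the two proofs coincide in substance.
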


\begin{proof}
The gist of the proof is to show that equations representing the relationship between the folding angles of degree-4 rigid origami vertex are symmetric under the transformation $\alpha_i\mapsto\pi-\alpha_i$. However, like most kinematic equations one finds in mechanics, folding angle equations can be quite convoluted (such as those in \cite{Ivan}) and do not easily exhibit this symmetry.

The recent degree-4 rigid origami vertex equations from \cite{FHK}, shown in Equations~\eqref{THARPeq1} and \eqref{THARPeq2} above, readily prove this symmetry. 
Since $\cos(x\pm y) = \cos((\pi-x)\pm(\pi-y))$, we immediately have that Equation~\eqref{THARPeq1} is the same equation that describes the opposite folding angle relationships of the dual vertex $C^*=(\pi-\alpha_1,\pi-\alpha_2,\pi-\alpha_3,\pi-\alpha_4)$.  That is, if $\tau_i$ is the folding angle of the crease between the sector angles $\pi-\alpha_{i-1}$ and $\pi-\alpha_{i}$ in $C^*$, then we have $(\rho_1,\rho_3)=(\pm\tau_1,\pm\tau_3)$ and $(\rho_2,\rho_4)=(\pm\tau_2,\pm\tau_4)$, where the ``plus and minus" distinctions are due to the tangents being squared in Equation~\eqref{THARPeq1}.

On the other hand, $\cos(x\pm y\pm z)=-\cos((\pi-x)\pm(\pi-y)\pm(\pi-z))$ and $\cos x=-\cos(\pi-x)$ but $\sin x\sin y = \sin(\pi-x)\sin(\pi-y)$.  Therefore, Equation~\eqref{THARPeq2} describes the adjacent folding angle relationships of $C^*$ \textit{if and only if we have that} either $\tau_i=\rho_i$ and $\tau_{i+1}=-\rho_{i+1}$ or $\tau_i=-\rho_i$ and $\tau_{i+1}=\rho_{i+1}$.  This is consistent with the difference between a rigidly folded elliptic vertex $C$, which might have all its creases be mountains, and its hyperbolic dual $C^*$, which would have its creases alternate mountain-valley-mountain-valley.

In conclusion, we have that Equations~\eqref{THARPeq1} and \eqref{THARPeq2} give the same folding angle relationships for $C$ and $C^*$, and thus the same configuration spaces, up to a change of sign in the folding angles.  This completes the proof of Theorem~\ref{thmduality}.

\end{proof}

\begin{figure*}
\centerline{\includegraphics[width=\linewidth]{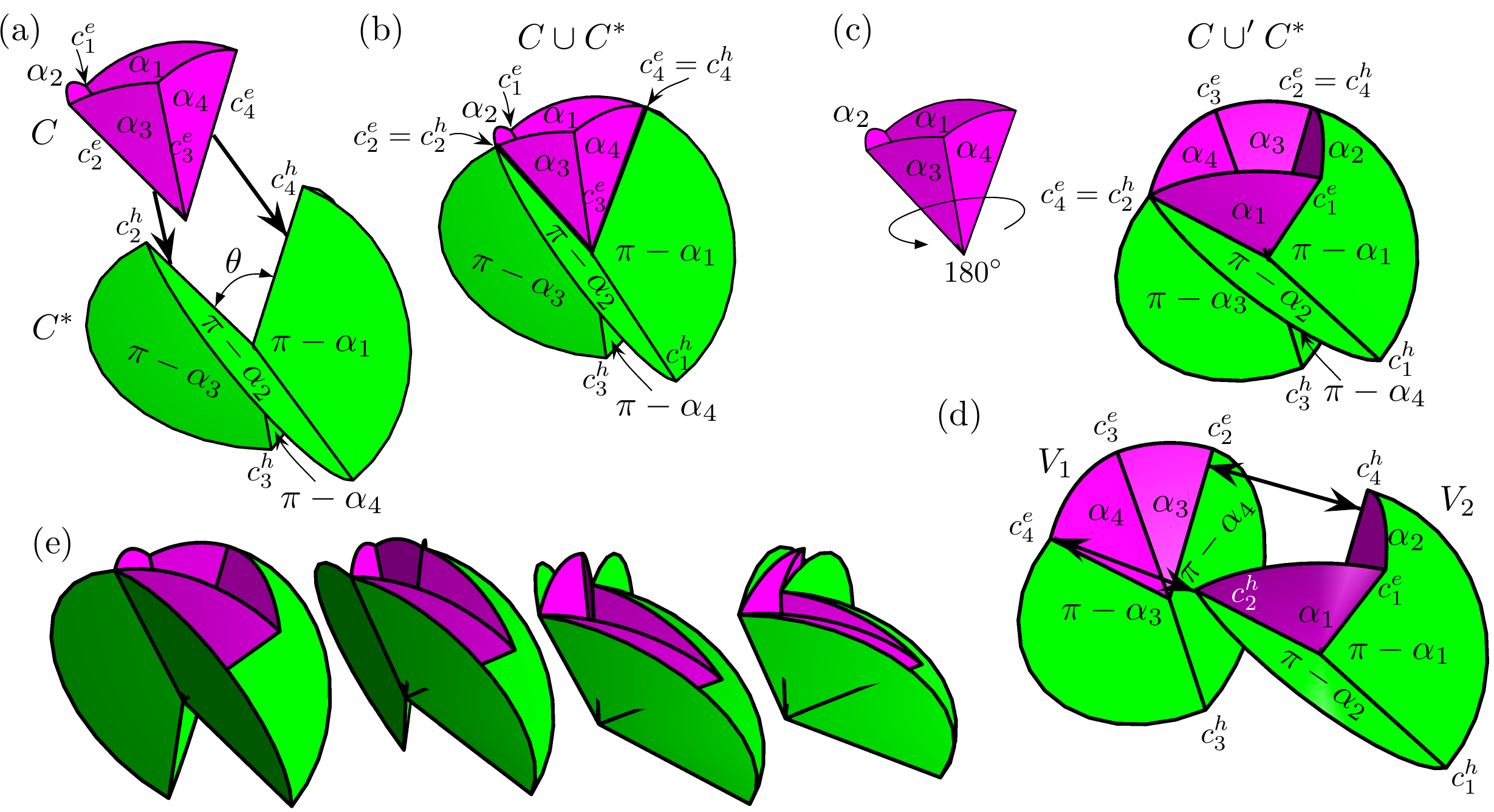}}
\caption{(a) An elliptic degree-4 vertex $C$ and its hyperbolic dual $C^*$. (b) The combined non-manifold vertex $C\cup C^*$. (c) Rotating $C$ $180^\circ$ to form the alternate combined vertex $C\cup' C^*$. (d) Splitting $C\cup' C^*$ into two flat-foldable vertices $V_1$ and $V_2$. (e) Illustrating the rigid folding of $C\cup' C^*$. See SI Movie S1 for animations.}\label{fig1-3}
\end{figure*}

\section{Geometric interpretation and flat-foldability}\label{sec3}

Similar to the bird's foot case in Example~\ref{ex1}, here is also a general geometric explanation of Theorem~\ref{thmduality}. Label the sector angles of an elliptic degree-4 vertex $C$ by $\alpha_i$ and its creases by $c_i^e$ as in Figure~\ref{fig1-3}(a), so that $c_i^e$ is between $\alpha_i$ and $\alpha_{i+1}$ (and $c_4^e$ is between $\alpha_4$ and $\alpha_1$). Then its dual $C^*$ will have corresponding sector angles $\pi-\alpha_i$, and let us denote its creases by $c_i^h$. If we synchronize the angle $\theta$ between $c_2^e, c_4^e$ in $C$ and $c_2^h, c_4^h$ in $C^*$ to be equal in their rigid foldings, then  we can identify the creases $c_2^e=c_2^h$ and $c_4^e=c_4^h$ to form a combined \textit{non-manifold vertex} $C\cup C^*$ as shown in Figure~\ref{fig1-3}(b). If we perform this ``gluing together" operation of $C$ and $C^*$ with the orientation as shown in Figure~\ref{fig1-3}(b), then the sector $\alpha_i$ in $C$ and $\pi-\alpha_i$ in $C^*$ will form a half-plane in $C\cup C^*$ (shown as a semicircle in Figure~\ref{fig1-3}(b), most easily seen with $\alpha_3$ and $\pi-\alpha_3$), and thus $C\cup C^*$ is actually the intersection of two folded planes (or two folded tacos in the figure). From this it is immediate that if $\rho_i^e$ and $\rho_i^h$ are the folding angles of the creases $c_i^e$ and $c_i^h$ respectively, then $\rho_i^e=\rho_i^h$ for $i=1, 3$  (since they are the same folded line in $C\cup C^*$) and $\rho_i^e=-\rho_i^h$ for $i=2,4$ (since those creases form the intersection of two planes). This is exactly the phenomenon we saw in Example~\ref{ex1}, except for general degree-4 vertices.

Note that when $C$, with a counterclockwise orientation of the $\alpha_i$ angles, is combined with $C^*$ as shown in Figure~\ref{fig1-3}(a) and (b), the full range of the rigid folding motion of $C\cup C^*$ might cause the material to self-intersect. (An example of this can be viewed in SI Movie S1.) However, reversing the orientation (i.e., placing the $\alpha_i$ angles in $C$ in clockwise order) will avoid such self-intersections (as shown in SI Movie S1) and still have the same kinematics since the synchronizing angle $\theta$ will be the same in either case. (This is true because in both cases we are identifying the creases $\{c_2^e,c_4^e\}$ with $\{c_2^h, c_4^h\}$.)

Now suppose we rotate the elliptic vertex $C$ by $180^\circ$ and attach it to $C^*$ so that $c_2^e=c_4^h$ and $c_4^e=c_2^h$, as in Figure~\ref{fig1-3}(c). We call this non-manifold vertex $C\cup' C^*$.  As in $C\cup C^*$, this alternate way of combining $C$ and $C^*$ will rigidly flex as we change the angle $\theta$ (as shown in Figure~\ref{fig1-3}(e)), but we may also decompose $C\cup'C^*$ into two flat-foldable vertices $V_1=(\alpha_1, \alpha_2,\pi-\alpha_1,\pi-\alpha_2)$ and $V_2=(\alpha_3,\alpha_4,\pi-\alpha_3,\pi-\alpha_4)$; see Figure~\ref{fig1-3}(d). Therefore \textit{the kinematics of an elliptic degree-4 vertex $C$ and its dual $C^*$ are related to the kinematics of the pair of flat-foldable vertices $(V_1,V_2)$}.

This is significant, and surprising. It is surprising because, why would anyone think that the folding of general degree-4 elliptic and hyperbolic polyhedral vertices are related to, or even controlled by, the Euclidean, flat-foldable special case? It is significant because as we have seen in Theorem~\ref{thm:FF}, the kinematics of degree-4 flat-foldable vertices can be algebraically expressed by a simple, linear relationship when re-parameterized by the tangent of half the folding angles. In fact, the Mathematica-generated images in Figure \ref{fig1-3}, below Figures \ref{fig5} and \ref{fig6}, and  the animations in supplemental movies S1 and S2 were all made using the more simple flat-foldable degree-4 folding angle equations from Theorem~\ref{thm:FF}.

\begin{figure*}
\centerline{\includegraphics[width=0.7\linewidth]{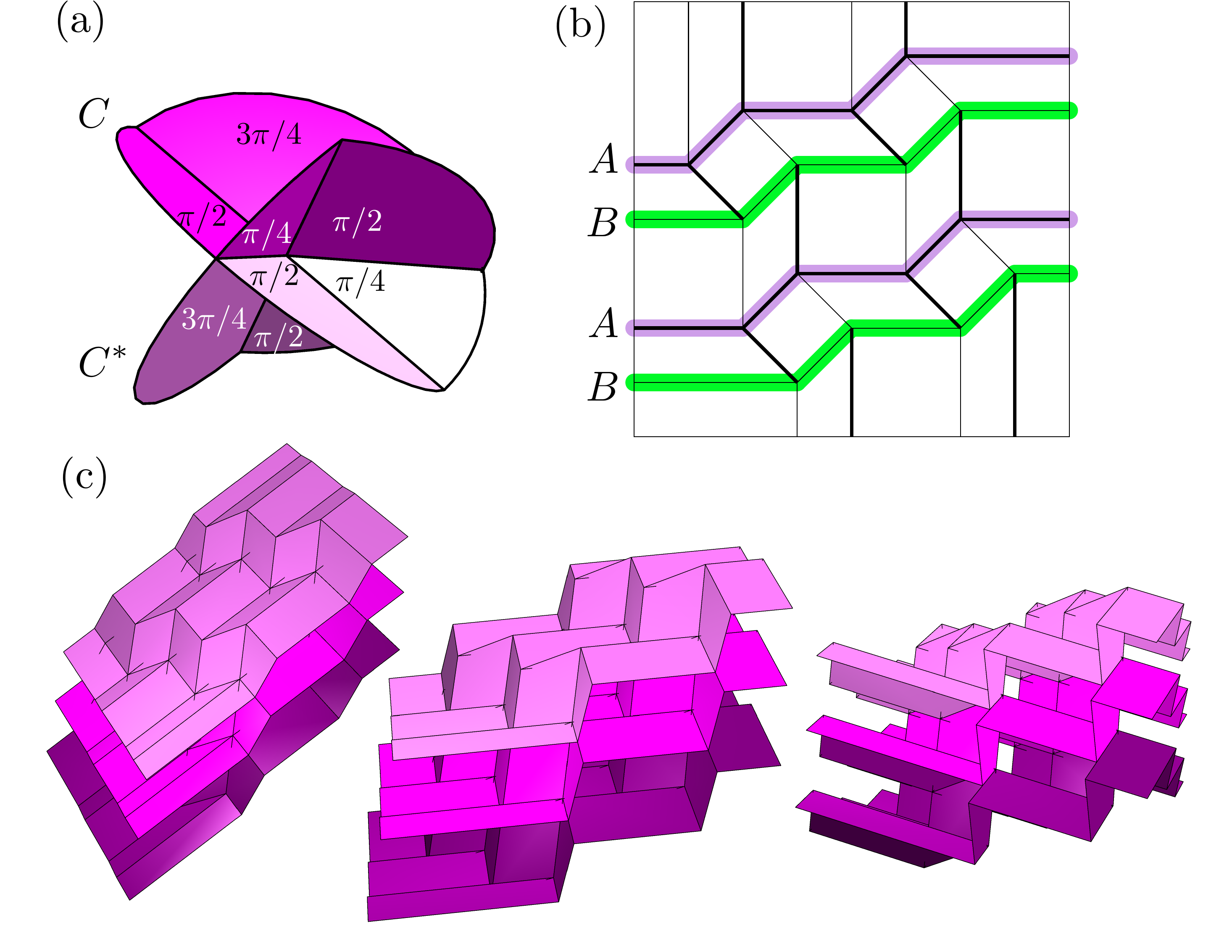}}
\caption{(a) The vertex $C=(\pi/4, \pi/2, 3\pi/4, \pi/2)$ with its dual $C^*$.  (b) The square twist origami tessellation with MV crease paths $A$ and $B$ highlighted. (c) Flexible CW structures made from the vertex in (a) by gluing mountain paths $A$ to valley paths $B$ of multiple sheets.}\label{fig5}
\end{figure*}

\section{CW complex generation}\label{sec4}

One potential application of the elliptic-hyperbolic duality is to use the combined vertices $C\cup C^*$ or $C\cup' C^*$ to create rigidly flexible 3D structures that form a continuously parameterized family of CW complexes in $\mathbb{R}^3$.  For example,  $C=(\pi/4, \pi/2, 3\pi/4, \pi/2)$ is the flat-foldable vertex used in the classic square twist crease pattern, which has been studied for its unusual mechanical and metamaterial features \cite{Silverberg2}. Combining $C$ with its dual gives the combined vertex $C\cup C^*$ (Figure~\ref{fig5}(a)). Tessellating this in 3-space is equivalent to gluing sheets of rigidly foldable square twist tessellations along corresponding rows of mountain and valley creases (Figure~\ref{fig5}(b)) and gives us the flexible structure shown in Figure~\ref{fig5}(c).  %See SI Movie S1 for an animation of this structure continuously flexing.

If instead we let $C=(\pi/4,\pi/2,\pi/4,\pi/2)$ and create the $C\cup' C^*$ non-manifold vertex, then tessellating the combined vertex again gives us sheets of square twists, but this time they intersect to form a very different kind of flexible 3D structure (Figure~\ref{fig6}(a)-(b)).

\begin{figure*}
\centerline{\includegraphics[width=\linewidth]{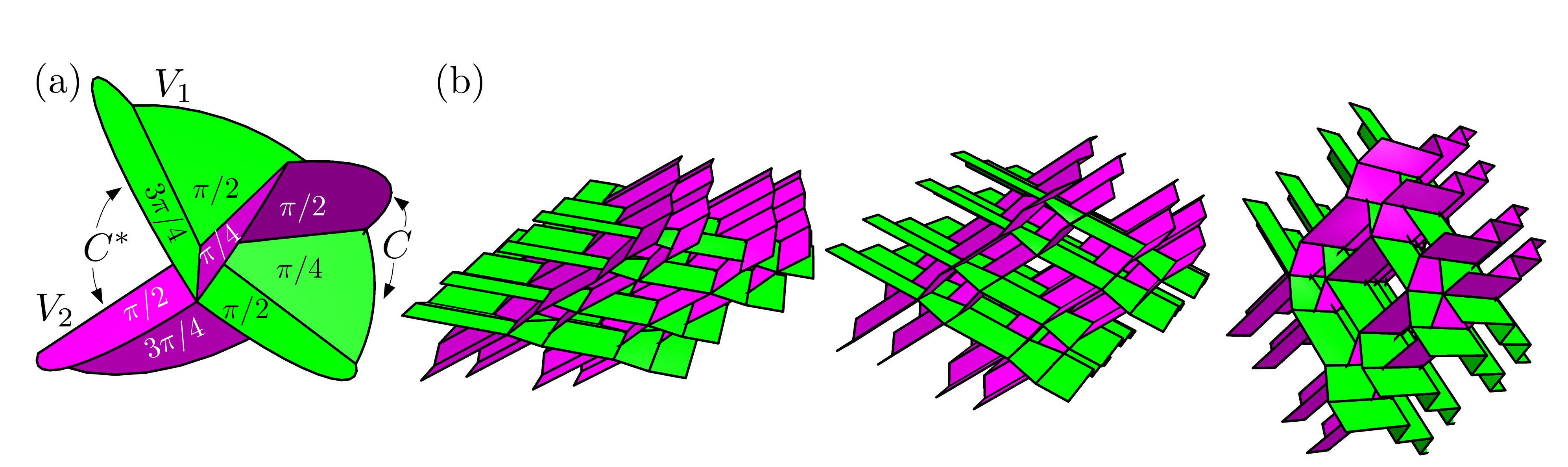}}
\caption{(a)  The vertex $C=(\pi/4, \pi/2, \pi/4, \pi/2)$ and its dual $C^*$. (b) Different flexes of the resulting flexible CW complex from (a). 
%See SI Movie S2 for an animation of this.
}\label{fig6}
\end{figure*}

\subsection{Auxetic behavior}\label{sec4.2}

The flexible CW complexes made by tessellating non-manifold $C\cup C^*$ or $C\cup' C^*$ vertices possess properties desired in metamaterials. For example, in the square twist $C\cup C^*$ vertex given by $C=(\alpha_1,\alpha_2,\alpha_3,\alpha_4)=(\pi/4, \pi/2, 3\pi/4, \pi/2)$ with its dual $C^*$ (Figure~\ref{fig5}), let $\rho$ be the folding angle of the ``major" creases of $C$, meaning the opposite pair of creases in $C$ that have the same MV parity and thus the same folding angle since here $C$ is flat-foldable and follows Theorem~\ref{thm:FF} (see \cite{Lang1} for more details on the ``major crease" terminology). In this example we have that $\rho$ is the folding angle between the $\alpha_1=\pi/4$ and $\alpha_2=\pi/2$ sector angles (which is the same as the crease between $\alpha_3=3\pi/4$ and $\alpha_4=\pi/2$). Then the tessellated flexible structure is contracting in two dimensions and expanding in the third for $0<\rho<\pi/2$ (since in this range the square twist sheets are rising from the flat, unfolded state into a 3D configuration) and contracting in all three dimensions for $\pi/2 < \rho < \pi$ (since the square twists will reach their maximum ``thickness" as 3D structures when the major folding angles around the central square are at right angles).  Thus, for this latter range of $\rho$ the flexible CW complex in Figure~\ref{fig5}(c) is contracting in all three dimensions and is therefore a three-dimensional auxetic structure with negative Poisson's ratio, a classic metamaterial property.

\section{Conclusion}\label{sec5}

The idea of combining rigid origami crease patterns to create flexible non-manifold 3D structures has been previously explored for engineering applications \cite{Tachi15,XingYou}. The duality of elliptic and hyperbolic degree-4 vertices presented here puts such prior work into a general, theoretical framework with many new directions for investigation. For example, the metamaterial properties of the 3D tessellations in Section~\ref{sec4} should be more fully explored. Furthermore, only two examples of such tessellated 3D  structures are given here; others are certainly possible. The field of rigidly-foldable origami tessellaitons that can be made from degree-4 vertices is quite rich \cite{Lang1,Lang2}. Therefore many other flexible 3D complexes can be explored from the dual vertex technique described here, giving a new tool for auxetic metameterial structure design.

In addition, the duality presented here points to a fundamental relationship among degree-4 rigid folding vertices and highlights the self-dual and influential role of the well-understood Euclidean flat-foldable case. This opens new questions: Can a moduli space of degree-4 rigid origami vertices with equivalent kinematics now be created? Can new folding angle equations for general degree-4 vertices be found via this connection to the flat-foldable case? Is a similar duality concept possible for higher-degree vertices?

%\bmhead{Supplementary information}
%
%\noindent\textbf{Movie S1 (separate file).}   Animations of dual vertices combining, rigidly folding, and decomposing into two flat-foldable vertices.
%
%\vspace{.1in}
%
%\noindent\textbf{Movie S2 (separate file).}  Animations of flexible structures made from dual vertices.  Examples shown: $C=(\pi/4, \pi/2, 3\pi/4, \pi/2)$ and $C=(\pi/4, \pi/2, \pi/4, \pi/2)$.

\section*{Acknowledgements}

This work was supported in part by NSF grant DMS-2347000.

\bibliographystyle{plain}
\bibliography{Hull-EH-arXiv}

\end{document}